\tikzset{>=triangle 45}
\def\b{\beta}
\def\z{\zeta}
\def\S{\footnotesize}
\newcommand{\R}{{\mathbb R}}
\newcommand{\C}{{\mathbb C}}
\newcommand{\D}{{\mathbb D}}
\newcommand{\inlineitem}[1][]{%
\ifnum\enit@type=\tw@
    {\descriptionlabel{#1}}
  \hspace{\labelsep}
\else
  \ifnum\enit@type=\z@
       \refstepcounter{\@listctr}\fi
    \quad\@itemlabel\hspace{\labelsep}
\fi}
\def\Cs{Ces\`{a}ro}
\DeclareMathOperator{\re}{Re}
\theoremstyle{plain}
\newtheorem{theorem}{Theorem}
\newtheorem{corollary}{Corollary}
\newtheorem{lemma}{Lemma}
\theoremstyle{definition}
\theoremstyle{remark}
\newtheorem{remark}{Remark}
\title{On the {\Cs}ro operator on the Hardy space in the upper half-plane}
\author{Valentin~V.~Andreev,\\
{\S Department of Mathematics}, \\
{\S Lamar University, Beaumont, TX 77710, USA}\\
\and
 Miron~B.~Bekker\footnote{Corresponding author},\\
 {\S Department of Mathematics}\\
{\S University of Pittsburgh at Johnstown, Johnstown, PA 15904, USA}\\
\and
Joseph~A.~Cima,
{\S Department of Mathematics},\\
{\S The University of North Carolina at Chapel Hill}\\
{\S  CB 3250, 329 Phillips Hall, Chapel Hill, NC, 27599, USA}
\and
{\S {\tt vvandreev8@gmail.com}, {\tt bekker@pitt.edu}, {\tt cima@email.unc.edu}}
}
\date{}
\begin{document}
\maketitle
\begin{abstract}
We discuss the {\Cs} operator on the Hardy space in the upper half-plane. We provide a new simple proof of the boundedness of this operator,  prove that this operator is equal to the sum of the identity operator and a unitary operator,
which implies its normality.
\end{abstract}
{\bf Keywords:} {Ces\`{a}ro operator, Hardy space, reproducing kernel, unitary \\operator.}\\
{\bf MSC 2020:} {47B38, 30H10, 47B32}

\section{Introduction}
In a classical paper by A.~Brown, P.~R.~Halmos and A.~L.~Shields \cite{BHS} were investigated the Ces\'{a}ro operators on different spaces ($l^2$, $L^2(0,1)$ $L^2(0,\infty)$). In particular, in \cite{BHS} was proved that $I-C_{\infty}^*$ is a bilateral shift of multiplicity one, where $C_{\infty}$ is the {\Cs} operator on $L^2(0,\infty)$. 
From that time study of {\Cs} operators attracted a lot of attention and many important results were obtained. We mention the result of  T.~L.~Kriete and David Trutt \cite{KT} who proved that the {\Cs} operator on $l^2$ is subnormal. 

Study of the {\Cs} operators on  spaces of analytic functions is currently an active field of research on intersection of operator theory and complex analysis. Many aspects of of the theory of {\Cs} operators in different settings, as well as some generalizations  of {\Cs} operators, are presented in the survey article by W.~Ross \cite{Ros}.
 
In 2013 A.~Arvanitidis and A.~Siskasis \cite{AS} introduced the {\Cs} operator on the Hardy spaces $H^p(\C_+)$ in the upper half-plane $\C_+$. They proved that the operator is bounded for $1<p\le\infty$ and determined its spectrum for $1<p<\infty$. These results were obtained using profound theorems from the theory of semigroups, in particular, the semigroups of composition operators. Application of this technique to the investigation of invariant subspaces of the {\Cs} operator on the Hardy space $H^2(\D)$ can be found in the recent paper \cite{GPR}.

In the present paper we give an alternative elementary proof of the boundedness of the {\Cs} operator on $H^p(\C_+)$. For $p=2$ we proof in an elementary way that the {\Cs} operator is the sum of the identity operator and a unitary operator, from which it follows, in particular, that the {\Cs} operator on $H^2(\C_+)$ is normal. In our approach  we use the fact that $H^2(\C_+)$ is a reproducing kernel Hilbert space. Note that the representation of the {\Cs} operator as the sum of the identity operator and a unitary operator can be obtained by further refinement of the methods that were used in \cite{AS}.

\section{Boundedness of the {\Cs} operator}
We denote by $\C_+$ the upper half-plane, $\C_+=\{z=x+iy:y>0\}$. For $1\le p\le\infty$ we denote by $H^p(\C_+)$ the Hardy space in $\C_+$, i.e. a function $f\in H^p(\C_+)$ ($1\le p<\infty$)  if $f$ is holomorphic in $\C_+$ and 
\begin{equation}\label{Hardy_Space_Definition}
\sup_{y>0}\int\limits_{-\infty}^{\infty}|f(x+iy)|^pdx<\infty.
\end{equation}
Also,
\begin{equation}\label{Hp_Norm}
\Vert f\Vert_p=\left[\sup_{y>0}\int\limits_{-\infty}^{\infty}|f(x+iy)|^pdx\right]^{1/p},
\end{equation}
and for $p=\infty$, $\Vert f\Vert_{\infty}=\sup_{z\in\C_+}|f(z)|$.
For $p=2$ the Hardy space $H^2(\C_+)$ is a Hilbert space and for $f,g\in H^2(\C_+)$ the inner product $<f,g>$ is defined by the formula
\begin{equation}\label{inner_product}
<f,g>=\int\limits_{-\infty}^{\infty}f(x)\overline{g(x)}dx
\end{equation}
where $f(x)=\lim_{z\to x}f(z)$ and the limit exists non-tangentially for almost all $x$ with respect to the Lebesgue measure.
The boundary function $f(x)$ belongs to $L^2(-\infty,\infty)$.

The {\Cs} operator $C$ is defined on $H^p(\C_+)$ by the following expression
\begin{equation}\label{Cesaro_Operator_Definition}
(Cf)(z)=\frac{1}{z}\int\limits_0^zf(\z)d\z.
\end{equation}
\begin{theorem}\label{boundedness_Cesaro_Operator}
For $1<p\le\infty$ the {\Cs} operator is bounded on $H^p(\C_+)$.
\end{theorem}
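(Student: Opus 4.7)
The plan is to rewrite the defining integral via the affine substitution $\zeta = tz$, $t \in [0,1]$, which transforms (\ref{Cesaro_Operator_Definition}) into the rotation-free form
\begin{equation*}
(Cf)(z) = \int_0^1 f(tz)\,dt.
\end{equation*}
This substitution is legitimate because for $t \in (0,1]$ and $z \in \C_+$ the point $tz$ lies in $\C_+$, where $f$ is holomorphic, so that $f(tz)$ is integrable on any compact subinterval of $(0,1]$, and the endpoint $t=0$ contributes measure zero.

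Once this identity is established, I would treat the case $p = \infty$ separately and trivially: $|(Cf)(z)| \le \int_0^1 |f(tz)|\,dt \le \|f\|_\infty$. For $1 < p < \infty$, I would apply Minkowski's integral inequality inside the supremum defining $\|\cdot\|_p$, namely
\begin{equation*}
\left(\int_{-\infty}^\infty \left|\int_0^1 f(t(x+iy))\,dt\right|^p dx\right)^{1/p} \le \int_0^1 \left(\int_{-\infty}^\infty |f(tx+ity)|^p\,dx\right)^{1/p} dt,
\end{equation*}
and then, for each fixed $t \in (0,1]$, perform the change of variable $u = tx$ inside the inner integral to obtain $\int_{-\infty}^\infty |f(tx+ity)|^p\,dx = t^{-1}\int_{-\infty}^\infty |f(u+ity)|^p\,du$. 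Taking the supremum over $y > 0$ (equivalently over $ty > 0$) yields $\|f(t\,\cdot)\|_p = t^{-1/p}\|f\|_p$.

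Combining these two estimates gives
\begin{equation*}
\|Cf\|_p \le \|f\|_p \int_0^1 t^{-1/p}\,dt = \frac{p}{p-1}\,\|f\|_p,
\end{equation*}
which is finite precisely when $p > 1$, recovering the expected operator-norm bound (and showing why the hypothesis $p > 1$ is sharp in this argument).

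The only subtle point, which I expect to be the principal thing worth writing carefully rather than a real obstacle, is the justification of Fubini/Minkowski: one needs $f(t(x+iy))$ to be jointly measurable in $(t,x)$ and to verify that $\int_0^1 \|f(t\,\cdot+ity)\|_{L^p(\R)}\,dt < \infty$ so that Minkowski's integral inequality applies. Both follow from the continuity of $f$ on $\C_+$ and the dilation identity above, so the argument remains elementary and bypasses the semigroup machinery used in \cite{AS}.
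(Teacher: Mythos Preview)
Your argument is correct and very close in spirit to the paper's: both start from the substitution $\zeta=tz$ to write $(Cf)(z)=\int_0^1 f(tz)\,dt$, both exploit the dilation identity $\int_\R|f(tx+ity)|^p\,dx=t^{-1}\int_\R|f(u+ity)|^p\,du$, and both bound the inner integral by $\|f\|_p^p$ using the $H^p$ supremum. The one genuine difference is the inequality you invoke. You apply Minkowski's integral inequality directly, which immediately produces the factor $\int_0^1 t^{-1/p}\,dt=p/(p-1)$. The paper instead inserts a weight $s^{\beta}s^{-\beta}$, applies H\"older's inequality with a free parameter $\beta\in(0,1/q)$, integrates, and then minimizes over $\beta$ to recover the same constant $p/(p-1)$. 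Your route is shorter and yields the sharp constant without an optimization step; the paper's weighted-H\"older route is the classical proof of Hardy's inequality and has the advantage of generalizing readily to the operators $C_\mu$ mentioned in the paper's Remark~2 (one simply shifts the exponent in the weight). Either way the proof is elementary and avoids the semigroup machinery of \cite{AS}.
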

\begin{proof} The statement about $H^{\infty}(\C_+)$ is obvious, and $\Vert C\Vert_{\infty}\le 1$. Here 
$\Vert C\Vert_{\infty}$ is the norm of the {\Cs} operator on $H^{\infty}(\C_+)$.  

Consider now the case $1<p<\infty$. Put in \eqref{Cesaro_Operator_Definition} $\z=zs$, $0\le s\le 1$. Then \eqref{Cesaro_Operator_Definition} takes the form
\begin{equation*}
(Cf)(z)=\int\limits_0^1f(zs)ds.
\end{equation*}
Consequently,
\begin{gather*}
|Cf(z)|\le\int\limits_0^1|f(zs)|s^{\beta}s^{-\beta}ds\le \\
\left[\int\limits_0^1s^{\beta p}|f(zs)|^pds\right]^{1/p}\left[\int\limits_0^1s^{-\beta q}ds\right]^{1/q}=\\
\frac{1}{(1-\beta q)^{1/q}}\left[\int\limits_0^1s^{\beta p}|f(zs)|^pds\right]^{1/p}.
\end{gather*}
Here $1/p+1/q=1$ and $0<\beta<1/q$. Therefore we have 
\begin{gather*}
\int\limits_{-\infty}^{\infty}|(Cf)(x+iy)|^pdx\le\frac{1}{(1-\beta q)^{p/q}}\int\limits_0^1\left[\int\limits_{-\infty}^{\infty}s^{\beta p}|f(sx+isy)|^pdx\right]ds= \\
\frac{1}{(1-\beta q)^{p/q}}\int\limits_0^1s^{\beta p-1}\left[\int\limits_{-\infty}^{\infty}|f(t+isy)|^pdt\right]ds.
\end{gather*}
Since for each $s\ge 0$ and $f\in H^p(\C_+)$
\begin{equation*}
 \int\limits_{-\infty}^{\infty}|f(t+isy)|^pdt\le\Vert f\Vert^p_p
\end{equation*}
we obtain
\begin{equation*}
\int\limits_{-\infty}^{\infty}|(Cf)(x+iy)|^pdx\le\frac{\Vert f\Vert^p_p}{(1-\beta q)^{p/q}}\int_0^1s^{\b p-1}ds=\frac{1}{(1-\beta q)^{p/q}\beta p}\Vert f\Vert_p^p.
\end{equation*}
Last inequality means
\begin{equation}
\Vert Cf\Vert_p\le\frac{1}{(1-\beta q)^{1/q}(\beta p)^{1/p}}\Vert f\Vert_p,
\end{equation}
that is the {\Cs} operator is bounded. Moreover, $\Vert C\Vert\le \inf\{(1-\beta q)^{-1/q}(\beta p)^{-1/p}:0<\beta<(p-1)/p\}.$
A simple calculation gives 
\begin{equation*}
\Vert C\Vert_p\le\frac{p}{p-1}.
\end{equation*}
In \cite{AS} it was proved that $\Vert C\Vert_p=p/(p-1)$. \\
Simple examples show the theorem does not hold for $p=1$.
\end{proof}
\begin{remark}
In \cite{Sed} it was proved that the class $H^p(\C_+)$ coincides with the class of functions $f$ which are holomorphic in the upper half-plane and such that 
\begin{equation*}
(\Vert f\Vert_p^*)^p=\sup\limits_{0<t<\pi}\int\limits_0^{\infty}|f(re^{it})|^pdr<\infty,\quad 0<p<\infty
\end{equation*}
and $\Vert f\Vert_p^*$ is equivalent to the usual $H^p$ norm, that is there are constants $0<A_p\le B_p<\infty$ which depend only on $p$ such that 
$A_p\Vert f\Vert_p\le\Vert f\Vert_p^*\le B_p\Vert f\Vert_p$ (for $p=2$ this class of functions was considered in \cite{Dzr}).
This fact can also be used to prove the boundedness of the {\Cs} operator on $H^p(\C_+)$ in the same way as above. 
\end{remark}
\begin{remark}
The same arguments that were used in the proof of Theorem \ref{boundedness_Cesaro_Operator} allow to prove the following slightly more general statement:\\
{\it Let $\re\mu>-\dfrac{p-1}{p}$, $1<p<\infty$. Then the operator $C_{\mu}$ defined by 
\begin{equation*}
(C_{\mu}f)(z)=\frac{1}{z^{\mu+1}}\int_0^z\z^{\mu}f(\z)d\z
\end{equation*}
is bounded on $H^p(\C_+)$.}
\end{remark}
\vskip 0.5truecm
\section{{\Cs} operator on $H^2(\C_+)$}
In this section we  assume that $p=2$ and use the Hilbert space structure of $H^2(\C_+)$, in particular, the fact that $H^2(\C_+)$ is a reproducing kernel Hilbert space.

\begin{lemma}\label{adjoint}The operator $C^*$ on $H^2(\C_+)$ is defined by the formula
\begin{equation}\label{adjoint_operator}
(C^*f)(z)=\frac{1}{2\pi i}\int\limits_{-\infty}^{\infty} f(t)\frac{1}{t}\log{\left(1-\frac{t}{z}\right)}dt
\end{equation}
\end{lemma}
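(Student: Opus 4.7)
The plan is to exploit the reproducing kernel structure of $H^2(\C_+)$. For $z \in \C_+$ the reproducing kernel is
\[
k_z(\zeta) = \frac{1}{2\pi i(\bar z - \zeta)},
\]
so that $\inner{f}{k_z} = f(z)$ for every $f \in H^2(\C_+)$; this is essentially the Cauchy integral formula applied on the real boundary, using that the boundary values $f(x) = \lim_{z\to x} f(z)$ belong to $L^2(\R)$. The key identity is then
\[
(C^*f)(z) = \inner{C^*f}{k_z} = \inner{f}{Ck_z},
\]
which reduces the lemma to an explicit evaluation of $Ck_z$ and a conjugation under the integral.

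To compute $Ck_z$ I would substitute $\zeta = sw$ in \eqref{Cesaro_Operator_Definition} (with $w$ playing the role of the argument of $Ck_z$), turning it into a scalar integral in $s$:
\[
(Ck_z)(w) = \frac{1}{2\pi i}\int_0^1\frac{ds}{\bar z - sw} = -\frac{1}{2\pi i\,w}\log\!\left(1 - \frac{w}{\bar z}\right).
\]
The principal branch is legitimate because the segment from $0$ to $w$ stays in $\overline{\C_+}$ while $\bar z$ lies in the lower half-plane, so $\bar z - sw$ never vanishes along the path. Note that $Ck_z$ is automatically in $H^2(\C_+)$, since $k_z \in H^2(\C_+)$ and $C$ is bounded by Theorem~\ref{boundedness_Cesaro_Operator}.

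Passing now to the inner product over $\R$, I would use that for real $t \neq 0$ one has $\overline{1/(2\pi i t)} = 1/(2\pi i t)$ and $\overline{\log(1-t/\bar z)} = \log(1-t/z)$, yielding
\[
(C^*f)(z) = \int_{-\infty}^{\infty} f(t)\,\overline{(Ck_z)(t)}\,dt = \frac{1}{2\pi i}\int_{-\infty}^{\infty}\frac{f(t)}{t}\log\!\left(1 - \frac{t}{z}\right)dt,
\]
which is \eqref{adjoint_operator}.

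The issues needing mild care are the branch of the logarithm (the principal branch works because $1 - t/\bar z$ has nonzero imaginary part whenever $t \neq 0$) and convergence of the boundary integral: the factor $\log(1 - t/z) = O(t)$ as $t\to 0$ cancels the $1/t$ singularity, while at infinity Cauchy--Schwarz combined with $\log|t|/t \in L^2(\{|t|\ge 1\})$ yields integrability against $f \in L^2(\R)$. The step I expect to be the main technical point is the consistent tracking of the branch of $\log$ through the contour integral and its conjugation on the real axis; everything else is bookkeeping once the reproducing kernel identity is in place.
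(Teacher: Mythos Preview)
Your argument is essentially identical to the paper's: compute $Ck_z$ explicitly from the reproducing kernel and then read off $(C^*f)(z)=\langle f,Ck_z\rangle$ as a boundary integral, with the added bonus that you spell out the branch and convergence issues the paper leaves implicit. One small slip to fix: the claim $\overline{1/(2\pi i t)}=1/(2\pi i t)$ is false (the quantity is purely imaginary), but what you actually need---and what makes your final formula come out right---is $\overline{-1/(2\pi i t)}=1/(2\pi i t)$, coming from the minus sign already present in $(Ck_z)(t)$.
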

\begin{proof} Recall that for any $f\in H^2(\C_+)$ the boundary value $\lim_{y\downarrow 0}f(x+iy)$ exists for almost all $x$ 
and the boundary function belongs to $L^2(\R)$. We use the same letter $f$ for the boundary function. The values of function $f$ in the upper half-plane are recovered from the boundary function by the Cauchy formula
\begin{equation}\label{Cauchy_integral}
f(z)=\frac{1}{2\pi i}\int\limits_{\infty}^{\infty}\frac{f(t)}{t-z}dt.
\end{equation} 
(see, for example \cite {Dur1}, Theorem 11.8). 
Formula \eqref{Cauchy_integral} means that the function
\begin{equation}\label{RK}
 k_z(w)=\frac{1}{2\pi i}\frac{1}{\bar{z}-w}
\end{equation}
is the reproducing kernel for $H^2(\C_+)$, that is 
$$
f(z)=<f,k_z>.
$$
Observe that 
\begin{equation}\label{C_on_RK}
 (Ck_z)(\z)=\frac{1}{2\pi i\z}\int_0^{\z}\frac{1}{\bar{z}-w}dw=
 -\frac{1}{2\pi i\z}\log{\left(1-\frac{\z}{\bar{z}}\right)}
\end{equation}
Since 
$$
(C^*f)(z)=<C^*f,k_z>=<f,Ck_z>
$$
formula \eqref{adjoint_operator} follows.
\end{proof}

Later on we will need a special case of formula \eqref{adjoint_operator}.\\ Because $(C^*k_z)(\z)=<C^*k_z,k_{\z}>=\overline{<Ck_{\z},k_z>}$ it follows that 
\begin{equation}\label{C*_on_RK}
(C^*k_z)(\z)=\frac{1}{2\pi i}\frac{1}{\bar{z}}\log{\left(1-\frac{\bar{z}}{\z}\right)}.
\end{equation}
\begin{theorem}\label{I+U}
For the {\Cs} operator on $H^2(\C+)$ the following equality is valid
\begin{equation}\label{representation}
C=I+U
\end{equation}
where $I$ is the identity operator and $U$ is a unitary operator. In particular, the {\Cs} operator on $H^2(\C_+)$ is normal.
\end{theorem}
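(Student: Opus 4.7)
The claim that $C=I+U$ with $U$ unitary is equivalent to asking that $U:=C-I$ satisfy $U^*U=UU^*=I$. Expanding $(C^*-I)(C-I)$ and $(C-I)(C^*-I)$, this reduces to the pair of operator identities
\[
C^*C \;=\; C+C^*, \qquad CC^* \;=\; C+C^*.
\]
Once these are established, normality of $C$ is immediate — it follows either directly from $C^*C=CC^*$, or from the representation $C=I+U$ with $U$ unitary (and hence normal). So the proof reduces entirely to verifying the two displayed identities.

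Following the reproducing kernel approach initiated in Lemma \ref{adjoint}, I will test each identity against pairs of kernels $k_z,k_w$ with $z,w\in\mathbb{C}_+$; since $\operatorname{span}\{k_z\}$ is dense in $H^2(\mathbb{C}_+)$, this is enough. Using the reproducing property, the first identity is equivalent to
\[
\langle Ck_z,Ck_w\rangle \;=\; (Ck_z)(w)+(C^*k_z)(w),
\]
whose right-hand side is already available in closed form from \eqref{C_on_RK} and \eqref{C*_on_RK}. The analogous test for $CC^*=C+C^*$ replaces the left-hand side by $\langle C^*k_z,C^*k_w\rangle$ while keeping the right-hand side the same.

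The core of the argument is the evaluation of these two boundary inner products. Each is an integral on $\mathbb{R}$ of a product of two logarithmic boundary values of the shape $t^{-1}\log(1-t/\alpha)$. My plan is to linearize the logarithms via the elementary representation
\[
\log(1-\zeta) \;=\; -\int_0^1 \frac{\zeta}{1-s\zeta}\,ds,
\]
interchange orders of integration, and then compute the inner real-line integral $\int_{-\infty}^\infty \frac{dt}{(t-a)(t-b)}$ by residues: the two poles sit in opposite half-planes, so a single residue contribution gives a clean closed form. A short double integral in the parameters $s,s'$ then produces an expression of the form $A\log w+B\log(w-\bar z)+D\log(-\bar z)$ with explicit coefficients, which should be matched against the similarly rewritten right-hand side.

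The step I expect to be the main obstacle is branch bookkeeping for the logarithms. The right-hand side, once its terms are expanded, contains $\log\bar z$ and $\log w$ on principal branches, while the residue computation naturally produces $\log(-\bar z)$; the final cancellation depends on the identity $\log(-\bar z)-\log\bar z=i\pi$, valid on the principal branch precisely because $z\in\mathbb{C}_+$ forces $\bar z\in\mathbb{C}_-$. This branch contribution is exactly what supplies the purely imaginary term needed to produce the $1/(2w)$-type residues appearing on the right-hand side. Once branches are fixed consistently on the two half-planes, both required identities follow, and hence $U=C-I$ is unitary, yielding \eqref{representation} and the normality of $C$.
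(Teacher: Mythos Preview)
Your overall strategy coincides with the paper's: reduce unitarity of $U=C-I$ to the two identities $C^*C=C+C^*$ and $CC^*=C+C^*$, verify these by computing $\langle Ck_z,Ck_w\rangle$ and $\langle C^*k_z,C^*k_w\rangle$ explicitly on reproducing kernels, and conclude by density. The paper does exactly this, but restricts to kernels $k_{is}$ on the positive imaginary axis, which keeps all auxiliary parameters real and sidesteps the branch issues you flag; the key integrals are then handled by one integration by parts (differentiating the product of logarithms against $\int x^{-2}\,dx$) followed by a residue computation. Your alternative---linearising each logarithm via $\log(1-\zeta)=-\int_0^1 \zeta(1-s\zeta)^{-1}\,ds$, evaluating the inner $t$-integral by a single residue, and then carrying out the remaining $(s,s')$ double integral---is a valid and slightly more systematic route to the same closed form; specialising your result to $z=is$, $w=it$ indeed reproduces the paper's formula \eqref{CkCk}. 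The trade-off is precisely what you anticipate: the imaginary-axis restriction in the paper makes the branch bookkeeping trivial, whereas your general-$z$ computation must track $\log(-\bar z)$ versus $\log\bar z$ carefully; in return, your parametric reduction avoids the integration-by-parts boundary terms and reduces everything to rational integrands from the outset.
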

\begin{proof} We claim that for any $s,t>0$ the following equalities are valid:
\begin{equation}\label{C-I}
<(C-I)k_{is},(C-I)k_{it}>=<k_{is},k_{it}>
\end{equation}
\begin{equation}\label{C*-I}
<(C^*-I)k_{is},(C^*-I)k_{it}>=<k_{is},k_{it}>
\end{equation}
Indeed,
\begin{gather*}
<(C-I)k_{is},(C-I)k_{it}>=\\
<Ck_{is},Ck_{it}>-<Ck_{is},k_{it}>-<k_{is},Ck_{it}>+<k_{is},k_{it}>=\\
<Ck_{is},Ck_{it}>-<Ck_{is},k_{it}>-<C^*k_{is},k_{it}>+<k_{is},k_{it}>.
\end{gather*}
Consider the inner product $<Ck_{is},Ck_{it}>$. Using \eqref{C_on_RK} and integrating by parts on the interval of the form 
$[-n,n]$ and letting $n$ tend to infinity one obtains
\begin{gather*}
<Ck_{is},Ck_{it}>=\frac{1}{4\pi^2}\int\limits_{-\infty}^{\infty}\frac{1}{x^2}\log{\left(1-i\frac{x}{s}\right)}\log{\left(1+i\frac{x}{t}\right)}dx=\\
\frac{1}{4\pi^2}\left\{-\frac{1}{x}\log{\left(1-i\frac{x}{s}\right)}\log{\left(1+i\frac{x}{t}\right)}|_{x=-\infty}^{\infty}+
\int\limits_{-\infty}^{\infty}\frac{1}{x}\frac{d}{dx}\left[\log{\left(1-i\frac{x}{s}\right)}\log{\left(1+i\frac{x}{t}\right)}\right]dx\right\}=\\
\frac{1}{4\pi^2}\int\limits_{-\infty}^{\infty}\left[\frac{1}{x}\frac{\log{\left(1+i\dfrac{x}{t}\right)}}{1-i\dfrac{x}{s}}
\left(-\frac{i}{s}\right)+\frac{1}{x}\frac{\log{\left(1-i\dfrac{x}{s}\right)}}{1+i\dfrac{x}{t}}
\left(\frac{i}{t}\right)
\right]dx=\\
\frac{1}{4\pi^2}\int\limits_{-\infty}^{\infty}\frac{1}{x}\frac{\log{\left(1+i\dfrac{x}{t}\right)}}{x+is}dx+
\frac{1}{4\pi^2}\int\limits_{-\infty}^{\infty}\frac{1}{x}\frac{\log{\left(1-i\dfrac{x}{s}\right)}}{x-it}dx
\end{gather*}
Note that for both integrands the point $x=0$ is a regular point. Both integrals are easily evaluated using the calculus of residue. For the first integral we need to close the contour of integration in the lower half plane and for the second integral we need to close the contour of integration in the upper half plane. As result one obtains
\begin{equation}\label{CkCk}
<Ck_{is},Ck_{it}>=\frac{1}{2\pi}\left[\frac{1}{s}\log{\left(1+\frac{s}{t}\right)}+\frac{1}{t}\log{\left(1+\frac{t}{s}\right)}
\right]
\end{equation}
From \eqref{CkCk} it follows that $\Vert Ck_{is}\Vert^2=\dfrac{\log{2}}{\pi s}$.

According to \eqref{C_on_RK} and \eqref{C*_on_RK}
\begin{gather*}
<Ck_{is},k_{it}>=(Ck_{is})(it)=\frac{1}{2\pi t}\log{\left(1+\frac{t}{s}\right)},\\
<C^*k_{is},k_{it}>=(C^*k_{is})(it)=\frac{1}{2\pi s}\log{\left(1+\frac{s}{t}\right)}.
\end{gather*}
Now \eqref{C-I} follows.

In order to prove \eqref{C*-I} we evaluate $<C^*k_{is},C^*k_{it}>$. Using \eqref{C*_on_RK} one obtains
\begin{equation*}
<C^*k_{is},C^*k_{it}>=\frac{1}{4\pi^2st}\int\limits_{-\infty}^{\infty}\log{\left(1+\frac{is}{x}\right)}
\log{\left(1-\frac{it}{x}\right)}dx.
\end{equation*}
Making change of variables $x\mapsto 1/x$ and then integrating by parts one obtains
\begin{gather*}
<C^*k_{is},C^*k_{it}>=\frac{1}{4\pi^2st}\int\limits_{-\infty}^{\infty}\frac{1}{x^2}\log{(1+isx)}\log{(1-itx)}dx=\\
\frac{1}{4\pi^2st}\int\limits_{-\infty}^{\infty}\frac{is\log{(1-itx)}}{x(1+isx)}dx+\frac{1}{4\pi^2st}\int\limits_{-\infty}^{\infty}\frac{-it\log{(1+isx)}}{x(1-itx)}dx=\\
\frac{1}{4\pi^2st}\int\limits_{-\infty}^{\infty}\frac{\log{(1-itx)}}{x(x-\dfrac{i}{s})}dx+
\frac{1}{4\pi^2st}\int\limits_{-\infty}^{\infty}\frac{\log{(1+isx)}}{x(x+\dfrac{i}{t})}dx.
\end{gather*}
Using calculus of residue again as above one obtains 
\begin{equation*}
<C^*k_{is},C^*k_{it}>=\frac{1}{2\pi}\left[\frac{1}{t}\log{\left(1+\frac{t}{s}\right)}+\frac{1}{s}\log{\left(1+\frac{s}{t}\right)}\right].
\end{equation*}
In particular, we have proved that for any $s,t>0$ 
\begin{equation}\label{main_equality}
<Ck_{is},Ck_{it}>=<C^*k_{is},C^*k_{it}>.
\end{equation}
Using again \eqref{C_on_RK} and \eqref{C*_on_RK} we obtain equality \eqref{C*-I}.
Therefore, for any vector $f\in H^2(\C_+)$ of the form $f=\sum a_jk_{is_j}$ (the sum is finite)
we have 
\begin{equation*}
\Vert (C-I)f\Vert^2=\Vert (C^*-I)f\Vert^2=\Vert f\Vert^2.
\end{equation*}
Let $\{s_j\}_{j=1}^{\infty}$ be a set of positive distinct numbers which has a limit point $s_0>0$. Then the set of reproducing 
 kernels $\{k_{is_j}\}$ is linearly independent and $\vee\{k_{is_j}:j=1,2,\ldots\}=H^2(\C_+)$. Indeed, if $h\in H^2(\C_+)$ and 
 $h\perp k_{is_j}$ for all $j=1,2,\ldots$, then $f(is_j)=0$ and, since $f$ is holomorphic, $f=0$. Our previous consideration implies that $\Vert (C-I)f\Vert^2=\Vert (C^*-I)f\Vert^2=\Vert f\Vert^2$ for any $f\in H^2(\C_+)$, that is 
that is the operator $C-I$ is unitary. 
\end{proof}

\begin{corollary}
The operator $C+C^*$ is a positive self-adjoint operator.
\end{corollary}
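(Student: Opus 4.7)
The plan is to derive this as an immediate consequence of Theorem \ref{I+U}. Write $U = C - I$, so by that theorem $U$ is unitary and $C = I + U$, whence $C^* = I + U^*$. Adding gives
\begin{equation*}
C + C^* = 2I + U + U^*.
\end{equation*}
Self-adjointness is then automatic, since $(C+C^*)^* = C^*+C = C+C^*$, or equivalently since $U+U^*$ is manifestly self-adjoint.

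For positivity, I would check the inner product $\langle (C+C^*)f,f\rangle$ directly for arbitrary $f \in H^2(\C_+)$. Using the decomposition above and the identity $\langle U^*f,f\rangle = \overline{\langle Uf,f\rangle}$, one obtains
\begin{equation*}
\langle (C+C^*)f, f\rangle = 2\|f\|^2 + 2\,\re\langle Uf,f\rangle.
\end{equation*}
Because $U$ is unitary, $\|U\| = 1$, and by Cauchy--Schwarz $|\langle Uf,f\rangle| \le \|Uf\|\,\|f\| = \|f\|^2$. Hence $\re\langle Uf,f\rangle \ge -\|f\|^2$, so $\langle (C+C^*)f,f\rangle \ge 0$.

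There is essentially no obstacle here; the content of the corollary is entirely packed into Theorem \ref{I+U}, and all that remains is the one-line observation that $2I + U + U^*$ is positive for any unitary $U$ since the spectrum of $U + U^*$ lies in $[-2,2]$.
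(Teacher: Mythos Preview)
Your proof is correct, but the paper takes a slightly different (and arguably slicker) route. Instead of writing $C+C^*=2I+U+U^*$ and bounding $\re\langle Uf,f\rangle$ via Cauchy--Schwarz, the paper expands the unitarity relation $(C^*-I)(C-I)=I$ to obtain directly $C^*C=C+C^*$. Since $C^*C$ is automatically positive and self-adjoint for any bounded operator $C$, the corollary follows in one line with no estimate needed. Your approach has the minor advantage of also exhibiting the spectral picture (the spectrum of $C+C^*$ lies in $[0,4]$), while the paper's identity $C+C^*=C^*C$ gives a structural reason for positivity and is worth noting in its own right.
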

Since the operator $C-I$ is unitary it follows that $(C^*-I)(C-I)=I$, that is $C^*C=C+C^*$.
\vskip 0.5truecm 
The next corollary  is a byproduct of the proof of Theorem \ref{I+U}.
\begin{corollary}
For any $s,t>0$ 
\begin{equation*}
\frac{1}{2}\left[\frac{1}{s}\log{\left(1+\frac{s}{t}\right)}+\frac{1}{t}\log{\left(1+\frac{t}{s}\right)}\right]\le\frac{\log{2}}{\sqrt{st}}
\end{equation*}
with equality if and only if $s=t$.
\end{corollary}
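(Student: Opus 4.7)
The plan is to recognize the left-hand side of the claimed inequality as an inner product already computed in the proof of Theorem \ref{I+U}, and then invoke the Cauchy--Schwarz inequality in $H^2(\C_+)$.

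Specifically, equation \eqref{CkCk} gives
\[
\langle Ck_{is}, Ck_{it}\rangle = \frac{1}{2\pi}\left[\frac{1}{s}\log\left(1+\frac{s}{t}\right) + \frac{1}{t}\log\left(1+\frac{t}{s}\right)\right],
\]
and taking $s=t$ in this formula yields $\|Ck_{is}\|^2 = \log 2 /(\pi s)$. The Cauchy--Schwarz inequality applied to the pair $Ck_{is}, Ck_{it}$ then reads
\[
\langle Ck_{is}, Ck_{it}\rangle \le \|Ck_{is}\|\cdot\|Ck_{it}\| = \sqrt{\frac{\log 2}{\pi s}}\sqrt{\frac{\log 2}{\pi t}} = \frac{\log 2}{\pi\sqrt{st}},
\]
and multiplying through by $\pi$ yields exactly the stated inequality. (The inner product on the left is manifestly positive for $s,t>0$, so there is no issue with signs in applying Cauchy--Schwarz.)

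For the equality statement, Cauchy--Schwarz achieves equality precisely when $Ck_{is}$ and $Ck_{it}$ are linearly dependent. By formula \eqref{C_on_RK}, $(Ck_{is})(\zeta) = -\frac{1}{2\pi i\zeta}\log(1+i\zeta/s)$, and analogously for $t$. A relation $\log(1+i\zeta/s) = c\log(1+i\zeta/t)$ holding as an identity in $\zeta$ forces $s=t$ (and $c=1$), for instance by comparing the first few Taylor coefficients at $\zeta = 0$. Thus for $s\ne t$ the inequality is strict, while for $s=t$ both sides equal $\log 2/s$. The main (very mild) obstacle is nothing more than verifying this last linear-independence step; everything else is a direct quotation of formulas already established in the proof of Theorem~\ref{I+U}.
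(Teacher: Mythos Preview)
Your proposal is correct and is exactly the argument the paper has in mind: the corollary is stated only as ``a byproduct of the proof of Theorem~\ref{I+U}'' with no further details, and the intended byproduct is precisely the Cauchy--Schwarz inequality applied to \eqref{CkCk} together with the norm formula $\|Ck_{is}\|^2=\log 2/(\pi s)$. One trivial slip: from \eqref{C_on_RK} with $z=is$ (so $\bar z=-is$) the correct expression is $(Ck_{is})(\zeta)=-\frac{1}{2\pi i\zeta}\log(1-i\zeta/s)$, not $\log(1+i\zeta/s)$; this does not affect your linear-independence argument for the equality case.
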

\vskip 1.0truecm
{\bf Acknowledgement}. Authors are very thankful to Dr. Mark~A.~Nudel'man (Odessa, Ukraine) for numerous useful discussions and comments.

\end{document}